\setlist{labelindent=1pt,itemsep=0.1cm}
\setlist[itemize]{leftmargin=0.7cm}
\setlist[enumerate]{itemindent=0.2em,leftmargin=0.7cm}
\begin{document}
\title*{Common end points of multivalued mappings in ordered metric spaces}
\titlerunning{Common end points of multivalued mappings in ordered metric spaces} %for an abbreviated version of your contribution title if the original one is too long
\author{Talat Nazir \and Sergei Silvestrov}
\authorrunning{T. Nazir, S. Silvestrov} %for an abbreviated version of your contribution title if the original one is too long

\institute{Talat Nazir,
\at
Department of Mathematical Sciences, University of South Africa, Florida 0003, South Africa. \\ \email{talatn@unisa.ac.za}
\and
Sergei Silvestrov
\at Division of Mathematics and Physics, School of Education, Culture and Communication, M{\"a}lardalen University, Box 883, 72123 V{\"a}ster{\aa}s, Sweden. \\  \email{sergei.silvestrov@mdu.se}
\and }
%
% Use the package "url.sty" to avoid
% problems with special characters
% used in your e-mail or web address
%

%\index[aut]{Nazir, Talat}
%\index[aut]{Silvestrov, Sergei}

\maketitle
\label{chap:NazirSilvestrov:CEPOMS}

\abstract*{Using the setting of ordered metric spaces, we obtain common end point of two multivalued mappings satisfying a generalized $(\psi,\varphi)$-weak contractive condition. Under comparative condition on the set of end points of multivalued mappings, our results assure the uniqueness of the end point. These results generalize and improve several recent results on single-valued as well as multivalued mappings.
\keywords{common end point, multivalued mapping, generalized contraction, ordered metric space}\\
{\bf MSC 2010 Classification:} 54H25, 47H10, 54E50}

\abstract{Using the setting of ordered metric spaces, we obtain common end point of two multivalued mappings satisfying a generalized $(\psi,\varphi )$-weak contractive condition. Under comparative condition on the set of end points of multivalued mappings, our results assure the uniqueness of the end point. These results generalize and improve several recent results on single-valued as well as multivalued mappings.
\keywords{common end point, multivalued mapping, generalized contraction, ordered metric space}\\
{\bf MSC 2020 Classification:} 54H25, 47H10, 54E50}

\section{Introduction and Preliminaries}
\label{secNaSe:Introductionconemetricspace}

In 2005, Nieto and Rodrigues-L\'{o}pez \cite{Nieto,Nieto2} proved a
modified variant of result of Ran and Reuring \cite{Ran}. Regan and Petru
\c{s}el \cite{Regan} proved fixed point theorems for generalized
contractions in ordered metric spaces. Also, Nieto et al. \cite{Nieto1}
improved some results given by Petru\c{s}el and Rus \cite{Petrusel}, in the
setting of abstract $L$-spaces in the sense of Fr\'{e}chet \cite{Frechet}. Agarwal et al. \cite{Agarwal} obtained fixed point results of generalized contractions in partially ordered metric spaces.
On the other hand, Alber and Guerre-Delabriere \cite{Alber} defined weakly
contractive maps on a Hilbert space and established a fixed point theorem
for such map. Afterwards, Rhoades \cite{Rhoades} using the notion of weakly
contractive maps, obtained a fixed point theorem in a complete metric space.
Dutta et al. \cite{Dutta} generalized the weak contractive condition and
proved a fixed point theorem for a self-map, which in turn generalizes
\cite[Theorem 1]{Rhoades} and the corresponding result in \cite{Alber}. The study
of common fixed points of mappings satisfying certain contractive conditions
has been at the center of vigorous research activity. Abbas and Khan
\cite{Abbas} extended the result of Dutta \cite{Dutta} to two mappings. Zhang and
Song \cite{Zhang} introduced the concept of a generalized $\varphi$-weak
contraction condition and obtained a common fixed point for two maps. Doric
\cite{Doric} proved a common fixed point theorem for generalized $(\psi
,\varphi )$-weak contractions. There are many results in the existing
literature which deal with fixed point of multivalued mappings (see
\cite{Jungck}). In some cases multivalued mapping $T$ defined on a nonempty set
$X $ assumes a compact value $Tx$ for each $x$ in $X.$ There are the
situations when for each $x$ in $X,$ $Tx$ is assumed to be closed and
bounded subset of $X.$ To prove the existence of fixed point of such
mappings, it is essential for mappings to satisfy certain contractive
conditions which involve Hausdorff metric. The metric in our case is defined
in terms of diameters of sets. Recently, Abbas et al. \cite{AbbasNazir}
obtained the common fixed points results of multivalued Perov type
contractions on cone metric spaces. Khan et al. \cite{KhanAbbas} established
some\ fixed points of multivalued contractions in the setup of partial
metric spaces.

The aim of this article is to obtain the common end points of two
multivalued mappings without appeal to continuity of any map involved
therein in the framework of ordered metric spaces. It is also noted that our
result do not require any commutativity condition to prove an existence of
common end point of two mappings. These results extend, unify and improve
existing comparable results in the existing literature.

Throughout this work, we denote $\mathbb{R}_{\geq 0}$ a set of non negative real numbers and $\mathbb{R}_{> 0}$ a set of positive real numbers. Following definitions and results will be needed in the sequel.

\begin{definition}
	\label{DefinNazSe:1.}
	Let $X$ be a nonempty set. The relation $"\preceq "\,$on $X$ is called a
	partial order if it satisfies the following conditions:
	\begin{enumerate}[label=\textup{\arabic*)}, ref=\arabic*]
		\item $x\preceq x$ for all $x\in X.$		
		\item $x\preceq y$ and $y\preceq x$ implies $x=y$ for all $x,y\in X.$		
		\item $x\preceq y$ and $y\preceq z$ implies $x\preceq z$ for $
		x,y,z\in X.$
	\end{enumerate}
A set with partial order $\preceq $ is called a partially ordered set. We
	will denote
$$X_{\preceq }:=\{(x,y)\in X^2:x\preceq y \ \text{or}\ y\preceq x \}, \quad s(X):=\{ \{x_{n}\}:x_{n}\in X, n\geq 1\}.$$
\end{definition}	
	Consider a subset $c(X)$ of $s(X)$ and a mapping $Lim:c(x)\rightarrow X.$
Then the triplet $(X,c(X),Lim)$ is called an $L$-space if the following
conditions are satisfied \cite{Frechet}:
\begin{enumerate}[label=\textup{(\roman*)}, ref=(\roman*)]
	\item If $x_{n}=x,$ for all $n\geq 1,$ then $\{x_{n}\} \in c(X)$ and
    $Lim\{x_{n}\}=x.$
	\item If $\{x_{n}\} \in c(X)$ and $Lim\{x_{n}\}=x,$ then $
	\{x_{n_{k}}\} \in c(X)$, for any subsequence $\{x_{n_{k}}\}$ of $\{x_{n}\}$
	and $Lim\{x_{n_{i}}\}=x.$
\end{enumerate}

 By definition, $c(X)$ contains convergent sequences in $X,$ $x:=Lim\{x_{n}\}$
is the limit of $\{x_{n}\}$ and we write $x_{n}\rightarrow x $ as $
n\rightarrow \infty .$ We denote $L$-space by $(X,\rightarrow ).$

\begin{definition}
	\label{DefinNazSe:2.}
	Let $X$ be a nonempty set. Then $X$ is called an ordered $L$-space if and
	only if the following conditions are satisfied:
	\begin{enumerate}[label=\textup{(\roman*)}, ref=(\roman*)]
		\item $(x,\rightarrow )$ is an $L$-space.	
		\item $(X,\preceq )$ is a partially ordered set.	
		\item $(x_{n}\rightarrow x\ \text{and}\ y_{n}\rightarrow y\ \text{with}\
		x_{n}\preceq y_{n},\ \text{for each}\ n\geq 1) \ \Rightarrow\ (x\preceq y).$
	\end{enumerate}
	We denote the ordered $L$-space by $(X,\rightarrow ,\preceq ).$
\end{definition}

\begin{definition}
	\label{DefinNazSe:3.}
	A nonempty set $X$ is called an ordered $L_{\downarrow}$-space if and only if:
	
	\begin{enumerate}[label=\textup{(\roman*)}, ref=(\roman*)]
		\item $(x,\rightarrow )$ is an $L$-space.		
		\item $(X,\preceq )$ is a partially ordered set.		
		\item $(x_{n}\rightarrow x\ \text{and}\ x_{n+1}\preceq x_{n},\ \text{for each}\ n\geq 1)\ \Rightarrow\ (x\preceq x_{n}\ \text{for each}\ n\geq 1).$
	\end{enumerate}
	We denote the ordered $L_{\downarrow}$-space by $(X,\rightarrow ,\preceq_{\downarrow}).$
\end{definition}

\begin{definition}
	\label{DefinNazSe:4.}
	A nonempty set $X$ is called an ordered $L_{\uparrow}$-space if and only if
	\begin{enumerate}[label=\textup{(\roman*)}, ref=(\roman*)]
		\item $(x,\rightarrow )$ is an $L$-space.	
		\item $(X,\preceq )$ is a partially ordered set.		
		\item $(x_{n}\rightarrow x \ \text{and}\ x_{n}\preceq x_{n+1},
        \ \text{for each}\ n\geq 1)\ \Rightarrow\ (x_{n}\preceq x\ \text{for each}\ n\geq 1).$
	\end{enumerate}
	We denote the ordered $L_{\uparrow}$-space by $(X,\rightarrow ,\preceq_{\uparrow}).$ If $
	(X,d)$ is a metric space, then the convergence structure is given by the
	metric $"d"$ and the triplet $(X,d,\preceq_{\downarrow})$ (respectively $
	(X,d,\preceq_{\uparrow})$) will be called ordered $L_{\downarrow}$ metric space
	(respectively ordered $L_{\uparrow}$ metric space).
\end{definition}

\begin{example}
	\label{ExamNazSe:1.} The Euclidean plane $	\mathbb{R}
	^{2}$ with partial order defined as $(a,b)\preceq (c,d),$ $a,b,c,d\in
	\mathbb{R}
	$ if and only if $a\leq c$ and $b\leq d,$ where $"\leq "$ is usual order in $
	\mathbb{R}
	.$ Then $
	\mathbb{R}
	^{2}$ is an ordered $L$-space, ordered $L_{\downarrow}$-space and ordered
$L_{\uparrow}$-space.
\end{example}

\begin{example}
	\label{ExampNazSe:2.}
	Consider the set $X=\{1,2,\dots\}$ with the usual order $"\leq "$ and $
	c(X)=\{ \{x\}_{n\geq 1}:x\in X\} \cup A,$ where $A:=\{ \{n_{k}\}: \{n_{k}\} \ \text{is subsequence of}\ \{n\}_{n\geq 1} \}.$ Let $Lim:c(X)\rightarrow X$ be
	defined as
	\begin{equation*}
		Lim(z)=\left \{
		\begin{array}{rl}
			x, &\ \text{if} \ z=\{x\}_{n\geq 1} \\
			1, &\ \text{if} \ z\in A.
		\end{array}
		\right. \label{tequ1.1}
	\end{equation*}
	Then $(X,c(X),Lim)$ is an $L$-space with partial order $"\leq ".$ Clearly if
	$\{x_{n}\} \in A$, then $x_{n}\leq x_{n+1}.$ Therefore,
$(X,\rightarrow ,\leq_{\downarrow})$ with $\leq_{\downarrow}=\leq$
is an ordered $L_{\downarrow}$-space. Also if we take $\{x_{n}\}=\{2,2,2,\dots\}$ and $\{y_{n}\}=\{2,3,4,\dots\},$ then $
	x_{n}\rightarrow 2$, $y_{n}\rightarrow 1$ and $x_{n}\leq y_{n}$ but $2\nleq
	1.$ Note that $y_{n}\leq y_{n+1}$ but $y_{n}\nleq 1$ for any $n\geq 1.$
	Therefore $(X,c(X),Lim)$ is neither an ordered $L$-space nor an ordered $
	L_{\uparrow}$-space.
	
\end{example}

\begin{example}
	\label{ExampNazSe:3.}
	Consider the sets $X=\{-1,-2,\dots\}$ with the usual order $"\leq "$ and
	$c(X)=\{ \{x\}_{n\geq 1}:x\in X\} \cup A,$ where $A:=\{ \{n_{k}\}: \{n_{k}\}\ \text{is subsequence of}\ \{-n\}_{n\geq 1} \}.$ Let $Lim:c(X)\rightarrow X$
	be defined as
	\begin{equation*}
		Lim(z)=\left \{
		\begin{array}{rl}
			x, & \ \text{if} \ z=\{x\}_{n\geq 1} \\
			-1, & \ \text{if} \ z\in A.
		\end{array}
		\right.    %\label{tequ1.2}
	\end{equation*}
	Then $(X,c(X),Lim)$ is an $L$-space with partial order $"\leq ".$ Clearly if
	$\{x_{n}\} \in A$ then $x_{n+1}\leq x_{n}.$ Therefore $(X,\rightarrow ,\leq_{\uparrow})$ with $\leq_{\uparrow}=\leq$ is an ordered $L_{\uparrow}$-space. Also, if we take $\{x_{n}\}=
	\{-2,-3,-4,\dots\}$ and $\{y_{n}\}=\{-2,-2,-2,\dots\},$ then $x_{n}\rightarrow
	-1 $, $y_{n}\rightarrow -2$ and $x_{n}\leq y_{n}$. Note that $-1\nleq -2$. Note also that $x_{n+1}\leq x_{n}$ but $-1\nleq x_{n}$ for any $n\geq 1.$ Therefore,
$(X,c(X),Lim)$ is neither an ordered $L$-space nor an ordered $L_{\downarrow}$-space.
\end{example}

The examples above show that $(X,\rightarrow ,\preceq ),$ $(X,\rightarrow
,\preceq_{\uparrow})$ and $(X,\rightarrow ,\preceq_{\downarrow})$ are three different
spaces.

Let $(X,d)$ be a metric space and $B(X)$ be the class of all nonempty
bounded subsets of $X$. We define the functions $\delta :B(X)\times
B(X)\rightarrow
\mathbb{R}_{\geq 0}$ and $D:B(X)\times B(X)\rightarrow
\mathbb{R}_{\geq 0}$ as follows
\begin{align*}
	\delta (A,B) &=\sup \{d(a,b):\;a\in A,\;b\in B\}, %\label{tequ1.3}
\\
	D(A,B) & =\inf \{d(a,b):\;a\in A,\;b\in B\}.  %\label{tequ1.4}
\end{align*}
If $A$ contains a single point $a$, we write $\delta (A,B)=\delta (a,B)$. Also, if
$B$ contains a single point $b$, it yields $\delta (A,B)=d(a,b)$. Clearly, $
\delta (A,B)=\delta (B,A).$ For $\delta (\{a\},B)$ and $\delta (\{a\},\{b\})$
we write $\delta (a,B)$ and $d(a,b)$ respectively. We appeal to the fact
that $\delta (A,B)=0$ if and only if $A=B=\{x\}$ for $A,B\in B(X)$, $x\in X$ and $0\leq
\delta (A,B)\leq \delta (A,B)+\delta (B,C)$ for $A,B,C\in B(X).$

A point $x\in X$ is called a fixed point of $T$ if $x\in Tx.$ We shall
denote the set of all fixed points of $T$ by $F_{T}$ and the set of common
fixed points of two multivalued mappings $S$ and $T$ by $F_{S,T}.$ If there
exists a point $x\in X$ such that $Tx=\{x\},$ then $x$ is termed as an end
point of $T$.

\begin{definition}
	\label{DefinNazSe:5.}
	Let $X$ be a nonempty set. A mapping $T:X\rightarrow 2^{X}$ is said to be
	\begin{enumerate}[label=\textup{(\roman*)}, ref=(\roman*)]
		\item partially dominating if for every $y\in X$ there exist some
        $x\in Ty$ such that $y\preceq x;$	
		\item partially dominated if for every $y\in X$ there exist some $x\in
		Ty$ such that $x\preceq y.$
	\end{enumerate}
\end{definition}

\begin{example}
	\label{ExampleNazSe:6.}
Consider
$\mathbb{R}^{2}$ with partial order stated in Example \ref{ExamNazSe:1.}. Let $S,T,F:
	\mathbb{R}^{2}\rightarrow 2^{(\mathbb{R}^{2})}$ be the mappings defined by
	\begin{eqnarray*}
		%\begin{array}{lll}
			S(x,y) & = & [(x+1,y),(x+2,y)], \\
			T(x,y) & = & [(x-2,y),(x-1,y)], \\  %\label{tequ1.5}
			F(x,y) & = & [(x-1,y),(x+1,y)],
		%\end{array}
	\end{eqnarray*}
	where $[(x+1,y),(x+2,y)]$, $[(x-2,y),(x-1,y)]$, $[(x-1,y),(x+1,y)]$ are line segments between the points in
	$\mathbb{R}^{2}.$
It is straight forward to verify that $S$ is a partially dominating
	mapping but not partially dominated, $T$ is a partially dominated but not
	partially dominating, whereas $F$ is a partially dominating as well as
	partially dominated mapping.
\end{example}

\section{Common End Point Results}

 In this section, we obtain common end point results for
multivalued generalized $(\psi ,\varphi )$-weak contractive mappings defined
on a complete ordered\ metric spaces.

\begin{definition}
	\label{DefinNazSe:6.}
	Mappings $T,S:X\rightarrow B(X)$ are said to satisfy generalized
$(\psi,\varphi)$-weak contractive condition if the following inequality
\begin{align}
&		\psi (\delta (Sx,Ty))\leq \psi (M(x,y))-\varphi (M(x,y)), \label{tequ1.6} \\
& 	M(x,y)=\max \left \{ d(x,y),\delta (x,Sx),\delta (y,Ty),{\frac{1}{2}}(D(x,Ty)+D(y,Sx)) \right \}, \label{tequ1.7}
\end{align}
	holds for all $x,y\in X_{\preceq }\ $and for given functions
$\psi ,\varphi:\mathbb{R}_{\geq 0}\rightarrow \mathbb{R}_{\geq 0}$.
\end{definition}

Denote $\Upsilon $, the collection of all non-decreasing functions
$\mathbb{R}_{\geq 0}\rightarrow \mathbb{R}_{\geq 0}$ such that for every two sequences $
\{a_{n}\}$ and $\{b_{n}\}$ in $\mathbb{R}_{\geq 0}$ such that $\lim\limits_{n\rightarrow \infty }
a_{n}=\lim\limits_{n\rightarrow \infty }b_{n}$ it holds that $\lim\limits_{n\rightarrow \infty }\psi
(a_{n})=\lim\limits_{n\rightarrow \infty }\psi (b_{n})$.\newline

We start with the following result.
\begin{theorem}
	\label{TheoremNazSe:1}
	Let $(X,d,\preceq_{\downarrow})$ be a complete ordered $L_{\downarrow}$ metric space. Suppose
	that $T,S:X\rightarrow B(X)$ are two partially dominated mappings which
	satisfy generalized $(\psi ,\varphi )$-weak contractive condition, where
    $\psi \in \Upsilon $, and $\varphi :\mathbb{R}_{\geq 0}\rightarrow \mathbb{R}_{\geq 0}$
	satisfies, on the sequences $\{t_n\}\subset \mathbb{R}_{\geq 0}$, the property 	
    \begin{equation*}
		(\varphi (t_{n})\rightarrow 0\  \ as \ n\rightarrow \infty) \ \ \ \Rightarrow\  \  \ (t_{n}\rightarrow 0 \ as \ n\rightarrow \infty).
	%	\label{tequ1.8e1}
	\end{equation*}
	Then, there exists the point $u\in X$ such that $\{u\}=Tu=Su$. Moreover, if
	the end points of $S$ and $T$ are comparable, then $S$ and $T$ have a unique
	common end point.
\end{theorem}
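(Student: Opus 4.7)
My plan is to adapt the classical Picard iteration to the multivalued, $\delta$-based setting. Beginning with an arbitrary $x_{0}\in X$ and using that $S,T$ are partially dominated, I would inductively build a sequence $\{x_{n}\}$ with $x_{2k+1}\in Sx_{2k}$, $x_{2k+2}\in Tx_{2k+1}$ and $x_{n+1}\preceq x_{n}$, so that every consecutive pair lies in $X_{\preceq }$ and the $(\psi ,\varphi )$-inequality is applicable at each step. Writing $d_{n}$ for $\delta (Sx_{n},Tx_{n+1})$ or $\delta (Tx_{n},Sx_{n+1})$ according to parity, I would bound each of the four terms defining $M(x_{2k},x_{2k+1})$ by $\max (d_{2k-1},d_{2k})$; the key elementary facts are $D(x_{2k+1},Sx_{2k})=0$ since $x_{2k+1}\in Sx_{2k}$, $\delta (x_{2k+1},Tx_{2k+1})\le d_{2k}$, and $\delta (x_{2k},Sx_{2k})\le d_{2k-1}$ because $x_{2k}\in Tx_{2k-1}$, while the half-sum involving $D(x_{2k},Tx_{2k+1})$ is controlled by the triangle inequality.

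Distinguishing whether this maximum equals $d_{2k}$ or $d_{2k-1}$, the weak contraction forces either $d_{2k}=0$ (in which case the iteration already terminates at an end point) or the telescoping inequality $\psi (d_{2k})\le \psi (d_{2k-1})-\varphi (d_{2k-1})$. Since $\psi $ is nondecreasing, the sequence $\{d_{n}\}$ is nonincreasing; summing the telescoping inequality gives $\sum \varphi (d_{n})<\infty $, hence $\varphi (d_{n})\to 0$, and the hypothesis on $\varphi $ yields $d_{n}\to 0$. In particular $d(x_{n},x_{n+1})\to 0$ together with $\delta (x_{n},Sx_{n})\to 0$ and $\delta (x_{n},Tx_{n})\to 0$. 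The Cauchy property of $\{x_{n}\}$ then follows by the customary subsequence-contradiction argument: assuming $\epsilon >0$ and indices with $d(x_{2n_{k}},x_{2m_{k}})\ge \epsilon $, taking $m_{k}$ minimal, one checks that several auxiliary distances converge to $\epsilon $, and inserting them in the contractive inequality leads, via $\psi \in \Upsilon $ and the $\varphi $-implication, to $\varphi (\epsilon )$-type contradiction. Completeness delivers $x_{n}\to u$, and since $x_{n+1}\preceq x_{n}$, the $L_{\downarrow }$ axiom guarantees $u\preceq x_{n}$ for every $n$, so the contraction remains available at $(x_{2n},u)$.

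To identify $u$ as a common end point, I would apply the $(\psi ,\varphi )$-inequality at $(x_{2n},u)$ and pass to the limit: elementary bounds combined with $\delta (x_{2n},Sx_{2n})\to 0$ yield $\delta (Sx_{2n},Tu)\to \delta (u,Tu)$ and $M(x_{2n},u)\to \delta (u,Tu)$, whence the inequality forces $\varphi (M(x_{2n},u))\to 0$, and the $\varphi $ property gives $\delta (u,Tu)=0$, i.e.\ $Tu=\{u\}$; the argument for $Su=\{u\}$ is symmetric. For uniqueness under the comparability hypothesis on end points, if $u$ and $v$ are common end points with $u\preceq v$, direct substitution into $M$ yields $M(u,v)=d(u,v)$, and the contraction collapses to $\varphi (d(u,v))=0$, forcing $u=v$. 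I anticipate the principal obstacle to be the Cauchy step: the $(\psi ,\varphi )$-condition supplies no geometric rate, and $\delta $ behaves like a diameter rather than a metric, so limit-passage in all four pieces of $M(x_{2n_{k}},x_{2m_{k}+1})$ must be handled using only the $\Upsilon $-regularity of $\psi $ and the implication $\varphi (t_{n})\to 0\Rightarrow t_{n}\to 0$ simultaneously.
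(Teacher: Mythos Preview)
Your proposal is correct and follows essentially the same Picard-iteration strategy as the paper: build the decreasing chain $x_{n+1}\preceq x_n$ via partial domination, show the $\delta$-gaps $d_n=\delta(A_n,A_{n+1})$ are nonincreasing and tend to $0$, run the standard $\varepsilon$-subsequence Cauchy argument, use the $L_{\downarrow}$ axiom to keep $(u,x_n)\in X_{\preceq}$, and pass to the limit in the contractive inequality to identify and then (under comparability) uniquify the common end point. One small correction: the telescoping you actually obtain is $\psi(d_n)\le\psi(d_{n-1})-\varphi\bigl(M(x_n,x_{n+1})\bigr)$ rather than $-\varphi(d_{n-1})$ (no monotonicity is assumed on $\varphi$), but summing still gives $\sum\varphi(M_n)<\infty$, hence $\varphi(M_n)\to0$ and $M_n\to0$, which forces $d(x_n,x_{n+1}),\,\delta(x_n,Sx_n),\,\delta(x_{n+1},Tx_{n+1})$ and then $d_n$ to $0$; the paper reaches the same conclusion by squeezing $d_n$ and $M_n$ between consecutive $d$-values and invoking the $\Upsilon$-property of $\psi$ rather than summability.
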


\begin{proof} \smartqed We construct the convergent sequence $\{x_{n}\}$ in $X$ and
	prove that the limit point of that sequence is a unique common end point for
	$T$ and $S$. Let $x_{0}$ be an arbitrary point in $X$. By given assumption,
	there exist $x_{1}\in Sx_{0}$ such that $x_{1}\preceq x_{0}.$ Again there
	exist $x_{2}\in Tx_{1}$ such that $x_{2}\preceq x_{1}.$ Continuing this
	process, for each nonnegative integer $n$, we obtain
	\begin{equation*}
		x_{2n+1}\in Sx_{2n}=A_{2n}\ ,\quad x_{2n+2}\in Tx_{2n+1}=A_{2n+1}\  % \label{tequ1.9}
	\end{equation*}
	\begin{equation*}
		\text{with}\ \ x_{n+1}\preceq x_{n}\ \text{for each}\ n. % \label{tequ1.10}
	\end{equation*}
	Also note that $x_{m}\preceq x_{n}$ for $m\geq n.$ Let
	\begin{equation}
		a_{n}=\delta (A_{n},A_{n+1}),\quad c_{n}=d(x_{n},x_{n+1}). \label{tequ1.11}
	\end{equation}
	
	Now we prove that $\{a_{n}\}$ and $\{c_{n}\}$ are convergent sequences.
	Suppose that $n$ is an odd integer. Substituting $x=x_{n+1}$ and $y=x_{n}$
	in \eqref{tequ1.6} and using properties of functions $\psi $ and $\varphi $, we obtain
	\begin{eqnarray*}
		%\begin{array}{lll}
			\psi (\delta (A_{n+1},A_{n})) & = & \psi (\delta (Sx_{n+1},Tx_{n})) \\
			& \leq  & \psi (M(x_{n+1},x_{n}))-\varphi (M(x_{n+1},x_{n})) \leq  \psi (M(x_{n+1},x_{n})),
		%\end{array}
	\end{eqnarray*}
	which further implies that
	\begin{equation*}
		\delta (A_{n+1},A_{n})\leq M(x_{n+1},x_{n}). %\label{tequ1.13}
	\end{equation*}
	Now from \eqref{tequ1.7} and from triangle inequality for $\delta $, we have
	\begin{equation}
		\begin{array}{ll}
			&  M(x_{n+1},x_{n})  =  \max \{d(x_{n+1},x_{n}),\delta
			(x_{n+1},Sx_{n+1}),\delta (x_{n},Tx_{n}), \\
            & \hspace{4cm} {\frac{1}{2}}\left( D(x_{n+1},Tx_{n})+D(x_{n},Sx_{n+1})\right)\} \\
			& \leq  \max \{ \delta (A_{n},A_{n-1}),\delta (A_{n},A_{n+1}),\delta (A_{n-1},A_{n}), \\
			& \hspace{3cm} {\frac{1}{2}}\left( D(x_{n+1},A_{n})+\delta (A_{n-1},A_{n+1})\right) \}
			\\
			& =  \max \{ \delta (A_{n},A_{n-1}),\delta (A_{n},A_{n+1}),{\frac{1}{2}}
			\delta (A_{n-1},A_{n+1})\} \\
			& \leq  \max \{ \delta (A_{n},A_{n-1}),\delta (A_{n},A_{n+1}), {\frac{1}{2}}\left( \delta (A_{n-1},A_{n})+\delta (A_{n},A_{n+1})\right)
			\} \\
			& =  \max \left \{ \delta (A_{n-1},A_{n}),\delta (A_{n},A_{n+1})\right \}.
		\end{array}
		\label{tequ1.14}
	\end{equation}
	If $\delta (A_{n},A_{n+1})>\delta (A_{n-1},A_{n})$, then
	\begin{equation}
		M(x_{n+1},x_{n})\leq \delta (A_{n+1},A_{n}). \label{tequ1.15}
	\end{equation}
	From \eqref{tequ1.14} and \eqref{tequ1.15}), it follows that $M(x_{n+1},x_{n})=\delta
	(A_{n+1},A_{n})$ which gives
	\begin{eqnarray*}
		%\begin{array}{lll}
			\psi (\delta (A_{n},A_{n+1})) & \leq & \psi (M(x_{n+1},x_{n}))-\varphi
			(M(x_{n+1},x_{n})) \\
			& < & \psi (M(x_{n+1},x_{n}))  =  \psi (\delta (A_{n},A_{n+1})),
		%\end{array}
	\end{eqnarray*}
	a contradiction. So, we have
	\begin{equation}
		\delta (A_{n},A_{n+1})\leq M(x_{n},x_{n+1})\leq \delta (A_{n-1},A_{n}). \label{tequ1.17}
	\end{equation}
	Similarly, we can also obtain inequalities \eqref{tequ1.17} in the case when $n$ is an
	even integer. Therefore, the sequence $\{a_{n}\}$ defined in \eqref{tequ1.17} is
	non-increasing and bounded below. Suppose that $a_{n}\rightarrow a$ when $
	n\rightarrow \infty $ for some $a\geq 0$. From \eqref{tequ1.17}, we have
	\begin{equation*}
		\lim\limits_{n\rightarrow \infty }\delta (A_{n},A_{n+1})=\lim\limits_{n\rightarrow \infty
		}M(x_{n},x_{n+1})=a\geq 0. %\label{tequ1.18}
	\end{equation*}
	From the above, one conclude that
	\begin{equation*}
		\varphi (M(x_{2n},x_{2n+1}))\leq \psi (M(x_{2n},x_{2n+1})). % \label{tequ1.19}
	\end{equation*}
	On taking limit on both side of above inequality, we obtain
	\begin{equation*}
		\lim\limits_{n\rightarrow \infty }\varphi (M(x_{2n},x_{2n+1}))\leq
		\lim\limits_{n\rightarrow \infty }\psi (\delta (A_{2n},A_{2n+1})).
	\end{equation*}
	Therefore, $\lim\limits_{n\rightarrow \infty }\varphi (M(x_{2n},x_{2n+1}))=0$,
	using the definition of $\varphi $, we have
	\begin{equation*}
		a=\lim\limits_{n\rightarrow \infty }M(x_{2n},x_{2n+1})=0.
	\end{equation*}
	Hence,
	$
		\lim\limits_{n\rightarrow \infty }a_{n}=\lim\limits_{n\rightarrow \infty }\delta
		(A_{n},A_{n+1})=0.
	$
	From \eqref{tequ1.11}, it follows that
	\begin{equation}
		\lim\limits_{n\rightarrow \infty }c_{n}=\lim\limits_{n\rightarrow \infty
		}d(x_{n},x_{n+1})=0. \label{tequ1.22}
	\end{equation}
	Now we show that $\{x_{n}\}$ is a Cauchy sequence. Assume to the contrary that there
	exists $\varepsilon >0$ for which we can find nonnegative integer sequences $
	\{m_{k}\}$ and $\{n_{k}\}$ such that $n_{k}$ is smallest element of the
	sequence $\{n_{k}\}$ for which $n_{k}>m_{k}>k,$
	\begin{equation*}
		\delta (A_{2m_{k}},A_{2n_{k}})\geq \varepsilon, \qquad
		\delta (A_{2m_{k}},A_{2n_{k}-2})<\varepsilon.
	\end{equation*}
	From the triangle inequality for $\delta $, we have
	\begin{eqnarray*}
	%	\begin{array}{lll}
			\varepsilon & \leq & \delta (A_{2m_{k}},A_{2n_{k}}) \\
			& \leq & \delta (A_{2m_{k}},A_{2n_{k}-2})+\delta
			(A_{2n_{k}-2},A_{2n_{k}-1})+\delta (A_{2n_{k}-1},A_{2n_{k}}) \\
			& < & \varepsilon +\delta (A_{2n_{k}-2},A_{2n_{k}-1})+\delta
			(A_{2n_{k}-1},A_{2n_{k}}).
	%	\end{array}
	\end{eqnarray*}
	Taking limit as $k\rightarrow \infty $ and using \eqref{tequ1.22}, we conclude that
	\begin{equation*}
		\lim\limits_{k\rightarrow \infty }\delta (A_{2m_{k}},A_{2n_{k}})=\varepsilon.
	\end{equation*}
	Moreover,
	\begin{align*}
		|\delta (A_{2m_{k}},A_{2n_{k}+1})-\delta (A_{2m_{k}},A_{2n_{k}})|\leq \delta
		(A_{2n_{k}},A_{2n_{k}+1}),
	\\
		|\delta (A_{2m_{k}-1},A_{2n_{k}})-\delta (A_{2m_{k}},A_{2n_{k}})|\leq \delta
		(A_{2m_{k}},A_{2m_{k}-1}).
	\end{align*}
	Using \eqref{tequ1.22}, we get
	\begin{equation*}
		\lim\limits_{k\rightarrow \infty }\delta
		(A_{2m_{k}-1},A_{2n_{k}})=\lim\limits_{k\rightarrow \infty }\delta
		(A_{2m_{k}},A_{2n_{k}+1})=\varepsilon
	\end{equation*}
	and
	\begin{equation*}
		|\delta (A_{2m_{k}-1},A_{2n_{k}+1})-\delta (A_{2m_{k}-1},A_{2n_{k}})|\leq
		\delta (A_{2n_{k}},A_{2n_{k}+1}).
	\end{equation*}
	Using \eqref{tequ1.22}, we get
	\begin{equation*}
		\lim\limits_{k\rightarrow \infty }\delta (A_{2m_{k}-1},A_{2n_{k}+1})=\varepsilon .
	\end{equation*}
	Also, from the definition of $M$ in \eqref{tequ1.7}, we have
	\begin{equation}
		\lim\limits_{k\rightarrow \infty }M(x_{2m_{k}},x_{2n_{k}+1})=\varepsilon. \label{tequ1.32}
	\end{equation}
	Putting $x=x_{2m_{k}},y=x_{2n_{k}+1}$ in \eqref{tequ1.6}, we have
	\begin{eqnarray*}
	%	\begin{array}{lll}
			\psi (\delta (A_{2m_{k}},A_{2n_{k}+1})) & = & \psi (\delta
			(Sx_{2m_{k}},Tx_{2n_{k}+1})) \\
			& \leq & \psi (M(x_{2m_{k}},x_{2n_{k}+1}))-\varphi
			(M(x_{2m_{k}},x_{2n_{k}+1})).
	%	\end{array}
	\end{eqnarray*}
	Thus we have
	\begin{equation}
		\varphi (M(x_{2m_{k}},x_{2n_{k}+1}))\leq \psi
		(M(x_{2m_{k}},x_{2n_{k}+1}))-\psi (\delta (A_{2m_{k}},A_{2n_{k}+1})). \label{tequ1.34}
	\end{equation}
	On which taking limit on both side of \eqref{tequ1.34} and using \eqref{tequ1.32}, we
	obtain
	\begin{equation*}
		\lim\limits_{k\rightarrow \infty }\varphi (M(x_{2m_{k}},x_{2n_{k}+1}))=0.
	\end{equation*}
	Therefore,
	$
		\varepsilon =\lim\limits_{k\rightarrow \infty }M(x_{2m_{k}},x_{2n_{k}+1})=0,
	$
	which is a contradiction. Therefore, $\{x_{n}\}$ is a Cauchy Sequence. Since $X$ is
	complete, there exists an element $u$ in $X$ such that $x_{n}\rightarrow u$
	as $n\rightarrow \infty $. Now, we show that the point $u$ is end point of $
	S $. As the limit point $u$ is independent of the choice of $x_{n}\in A_{n}$, we also get
	\begin{equation}
		\lim\limits_{n\rightarrow \infty }\delta (Sx_{2n},u)=\lim\limits_{n\rightarrow \infty
		}\delta (Tx_{2n+1},u)=0. \label{tequ1.36}
	\end{equation}
	From
	\begin{eqnarray*}
		% \begin{array}{lll}
			M(u,x_{2n+1}) & = & \max \{d(u,x_{2n+1}),\delta (u,Su),\delta
			(x_{2n+1},Tx_{2n+1}), \\
			&  & {\frac{1}{2}}\left(D(u,Tx_{2n+1})+D(x_{2n+1},Su)\right) \},
		% \end{array}
	\end{eqnarray*}
	we have $M(u,x_{2n+1})\rightarrow \delta (u,Su)$ as $n\rightarrow \infty $.
	Since
	\begin{equation*}
		\psi (\delta (Su,Tx_{2n+1})\leq \psi (M(u,x_{2n+1}))-\varphi (M(u,x_{2n+1})),
	\end{equation*}
	we have
	\begin{equation*}
		\varphi (M(u,x_{2n+1}))\leq \psi (M(u,x_{2n+1}))-\psi (\delta (Su,Tx_{2n+1})).
	\end{equation*}
	Taking limit as $n\rightarrow \infty $ and using \eqref{tequ1.36}, we obtain $
	\lim\limits_{n\rightarrow \infty }\varphi (M(u,x_{2n+1}))=0$ which implies $\delta
	(u,Su)=0$ or $Su=\{u\}$. Now, we show that $u$ is also an end point for $T$.
	It is easy to see that $M(u,u)=\delta (u,Tu)$. Using that $u$ is end point
	for $S$,  have
	\begin{eqnarray}
		% \begin{array}{lll}
			\psi (\delta (u,Tu))  =  \psi (\delta (Su,Tu))
			 \leq  \psi (M(u,u))-\varphi (M(u,u)) \notag \\
            =  \psi (d(u,Tu))-\varphi (d(u,Tu)), \notag
		% \end{array}
	\end{eqnarray}
	and by an argument similar to the above, we conclude that $\delta (u,Tu)=0$
	or $\{u\}=Tu$.
	
	Now assume that the end point of $S$ and $T$ are comparable. We are to show
	that $u$ is a unique common end point for $S$ and $T$. If there exists
	another point $v$ in $X$ such that $Sv=Tv=\{v\}$, then $(u,v)\in X_{\preceq
	}.$ Also $M(u,v)=d(u,v)$ and from
	\begin{eqnarray}
	% 	\begin{array}{lll}
			\psi (d(u,v))  =  \psi (\delta (Su,Tv))  \leq  \psi (M(u,v))-\varphi (M(u,v)) \notag \\
			 =  \psi (d(u,v))-\varphi (d(u,v)), \notag
	%	\end{array}
	\end{eqnarray}
	that is,
	$
		% \begin{array}{lll}
			\varphi (d(u,v))\leq \psi (d(u,v))-\psi (d(u,v))
		% \end{array}
	$
	implies $\varphi (d(u,v))=0$ and we conclude that $u=v$. The proof is
	completed.
\qed \end{proof}

\begin{example}
	\label{ExampleNazSe:2.}
	Consider $X=[0,\frac{1}{4}]\times \lbrack 0,\frac{1}{4}]$, a square in the plane $\mathbb{R}^{2}$ with usual metric $d$ and partial order as stated in Example 1. Then
	$(\mathbb{R}^{2},d,\leq_{\downarrow})$ is a complete ordered $L_{\downarrow}$ metric space.
Let $S,T:X\rightarrow B(X)$ be defined as
	\begin{equation*}
		S(x_{1},x_{2})=[(0,0),(\frac{x_{1}}{4},\frac{x_{2}}{4})],\
		T(x_{1},x_{2})=[0,\frac{x_{1}}{4}]\times \lbrack 0,\frac{x_{2}}{4}]\ \text{for all}\ (x_{1},x_{2})\in X.
	\end{equation*}
	Let $\psi ,\varphi :\mathbb{R}_{\geq 0}\rightarrow \mathbb{R}_{\geq 0}$ be defined by
	\begin{equation*}
		\psi (t)=\left \{
		\begin{array}{lll}
			2t, & if & t\in \lbrack 0,\frac{1}{2}) \\
			&  &  \\
			3, & if & t\geq \frac{1}{2}.
		\end{array}
		\right.
	\end{equation*}
	Note that $\psi \in \Upsilon $ and
	\begin{equation*}
		\varphi (t)=\left \{
		\begin{array}{lll}
			\frac{t}{5}, & if & t\in \lbrack 0,5) \\
			&  &  \\
			\frac{1}{2}, & if & t\geq 5.
		\end{array}
		\right.
	\end{equation*}
	Then $\varphi (t_{n})\rightarrow 0$ implies $t_{n}\rightarrow 0$. Now, for $
	x=(x_{1},x_{2})$, $y=\left( y_{1},y_{2}\right) $ in $X$, the
	following cases arise: \newline
\noindent	(i) If $x=y$, then
	\begin{multline*}
			\psi (\delta (Sx,Ty))  =  \psi (\delta (Sx,Tx))
			 =  \psi (\frac{1}{4}\sqrt{x_{1}^{2}+x_{2}^{2}})
            = \dfrac{\sqrt{x_{1}^{2}+x_{2}^{2}}}{2}
            =\frac{1}{2}\delta (x,Sx)\\
             \leq   \frac{9}{5}M(x,y)
		      =  2M(x,y)-\frac{1}{5}M(x,y) =  \psi (M(x,y))-\varphi (M(x,y)).
	\end{multline*}
\noindent	(ii) If $x>y$, then
	\begin{multline*}
			\psi (\delta (Sx,Ty))  \leq  \psi
            \big(\sup\limits_{\substack{(x_{1},x_{2})\in Sx  \\
            (y_{1},y_{2})\in Ty}}\sqrt{(x_{1}-y_{1})^{2}+(x_{2}-y_{2})^{2}}\ \big) \\
			 \leq  \psi \big(\sup \limits_{(x_{1},x_{2})\in Sx}\sqrt{x_{1}^{2}+x_{2}^{2}}\ \big)
			=  \frac{1}{2}\delta (x,Sx)\\
            \leq  \frac{9}{5}M(x,y)
			 =2M(x,y)-\frac{1}{5}M(x,y) =  \psi (M(x,y))-\varphi (M(x,y)).
		\end{multline*}
\noindent	(iii) If $x<y$, then
	\begin{multline*}
			\psi (\delta (Sx,Ty))  \leq  \psi (\sup\limits_{\substack{(x_{1},x_{2})\in Sx  \\
                (y_{1},y_{2})\in Ty}}\sqrt{(x_{1}-y_{1})^{2}+(x_{2}-y_{2})^{2}}) \leq  \psi(\sup\limits_{(y_{1},y_{2})\in Ty}\sqrt{y_{1}^{2}+y_{2}^{2}}\ )
			\\
			 =  \frac{1}{2}\delta (y,Ty)\leq \frac{9}{5}M(x,y) = 2M(x,y)-\frac{1}{5}M(x,y) =  \psi (M(x,y))-\varphi (M(x,y)).
	\end{multline*}
Note that $M(x,y)\leq \frac{\sqrt{2}}{4}<\frac{1}{2}$ for all $x,y \in X$.
	Thus $S$ and $T$ satisfy the generalized $(\psi ,\varphi )$-weak contraction
	for all $x,y\in X$. Therefore, all the axioms of Theorem \ref{TheoremNazSe:1} are
	satisfied. Moreover, $(0,0)$ is the unique common end point in $X$.
\end{example}

 Following similar argument to that given in Theorem \ref{TheoremNazSe:1}, we can prove the following theorem.

\begin{theorem}
	\label{TheoremNazSe:2}
	Let $(X,d,\preceq_{\uparrow})$ be a complete ordered $L_{\uparrow}$ metric space. Suppose
	that $T,S:X\rightarrow B(X)$ be two partially dominating mappings that
	satisfy generalized $(\psi ,\varphi )$-weak contractive condition, where $
	\psi \in \Upsilon$, and $\varphi:\mathbb{R}_{\geq 0}\rightarrow \mathbb{R}_{\geq 0}$
	satisfies, on the sequences $\{t_n\}\subset \mathbb{R}_{\geq 0}$, the property
	\begin{equation*}
		(\varphi (t_{n})\rightarrow 0\  \ as \ n\rightarrow \infty) \ \ \ \Rightarrow\  \  \ (t_{n}\rightarrow 0 \ as \ n\rightarrow \infty).
	\end{equation*}
	Then, there exists the point $u\in X$ such that $\{u\}=Tu=Su$. Moreover, if
	the end points of $S$ and $T$ are comparable, then $S$ and $T$ have a unique
	common end point.
\end{theorem}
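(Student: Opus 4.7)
\smartqed
The plan is to mirror the proof of Theorem \ref{TheoremNazSe:1}, but construct an \emph{ascending} iterative sequence and rely on the $L_{\uparrow}$ property in place of the $L_{\downarrow}$ property to carry comparability into the limit. Fix $x_{0}\in X$. Since $S$ is partially dominating, there exists $x_{1}\in Sx_{0}$ with $x_{0}\preceq x_{1}$; since $T$ is partially dominating, there exists $x_{2}\in Tx_{1}$ with $x_{1}\preceq x_{2}$. Iterating, we produce a sequence $\{x_{n}\}$ with $x_{2n+1}\in Sx_{2n}=:A_{2n}$, $x_{2n+2}\in Tx_{2n+1}=:A_{2n+1}$, and $x_{n}\preceq x_{n+1}$ for every $n$; by transitivity $x_{n}\preceq x_{m}$ whenever $n\le m$, so every pair $(x_{n},x_{m})$ lies in $X_{\preceq}$ and the contractive condition (\ref{tequ1.6}) is applicable at each step.

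Setting $a_{n}=\delta(A_{n},A_{n+1})$ and $c_{n}=d(x_{n},x_{n+1})$, I would next repeat verbatim the chain of estimates (\ref{tequ1.14})--(\ref{tequ1.17}) from the proof of Theorem \ref{TheoremNazSe:1}. Nothing in that block of inequalities uses the \emph{direction} of the order; only the membership $(x_{n+1},x_{n})\in X_{\preceq}$ matters, and this is furnished by the new construction. This yields that $\{a_{n}\}$ is non-increasing and bounded below, hence convergent to some $a\geq 0$, and that $\lim_{n\to\infty} M(x_{n},x_{n+1})=a$. Applying $\psi\in\Upsilon$ to the equal-limit sequences $\{\delta(A_{n},A_{n+1})\}$ and $\{M(x_{n},x_{n+1})\}$ together with (\ref{tequ1.6}) forces $\varphi(M(x_{2n},x_{2n+1}))\to 0$; the hypothesis on $\varphi$ then gives $a=0$, so $a_{n}\to 0$ and consequently $c_{n}\to 0$.

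The Cauchy argument is a word-for-word repetition of the one in Theorem \ref{TheoremNazSe:1}: assume $\{x_{n}\}$ is not Cauchy, extract the usual subsequences $\{2m_{k}\},\{2n_{k}\}$, derive $\lim_{k}\delta(A_{2m_{k}},A_{2n_{k}+1})=\varepsilon$ and $\lim_{k}M(x_{2m_{k}},x_{2n_{k}+1})=\varepsilon$, apply (\ref{tequ1.6}) and the $\varphi$-property to reach $\varepsilon=0$, a contradiction. By completeness, $x_{n}\to u$ for some $u\in X$. Here is the step that requires the new hypothesis: since $(X,d,\preceq_{\uparrow})$ is an ordered $L_{\uparrow}$ metric space and $x_{n}\preceq x_{n+1}$, condition (iii) of Definition \ref{DefinNazSe:4.} yields $x_{n}\preceq u$ for every $n$. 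Hence $(x_{2n+1},u)\in X_{\preceq}$, which is exactly what is needed to substitute $x=x_{2n+1},\,y=u$ (or $x=u,\,y=x_{2n+1}$) into (\ref{tequ1.6}); this is the main obstacle in adapting the earlier proof, and it is resolved cleanly precisely by the $L_{\uparrow}$ axiom.

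From this point on the argument is identical to that of Theorem \ref{TheoremNazSe:1}: compute $M(u,x_{2n+1})\to\delta(u,Su)$, conclude via (\ref{tequ1.6}) and the $\varphi$-property that $\delta(u,Su)=0$, so $Su=\{u\}$; then note $M(u,u)=\delta(u,Tu)$ and repeat the estimate to obtain $Tu=\{u\}$. For uniqueness, if $v$ is another common end point with $u$ and $v$ comparable, then $(u,v)\in X_{\preceq}$ and $M(u,v)=d(u,v)$, so (\ref{tequ1.6}) forces $\varphi(d(u,v))=0$ and hence $u=v$, completing the proof.
\qed
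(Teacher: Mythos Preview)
Your proposal is correct and follows precisely the route the paper intends: the paper itself offers no separate argument for Theorem~\ref{TheoremNazSe:2}, merely stating that it is proved by ``following similar argument to that given in Theorem~\ref{TheoremNazSe:1}''. You have carried out exactly that adaptation---building an ascending iterative sequence via the partially dominating hypothesis and invoking the $L_{\uparrow}$ axiom to secure comparability of the limit with the iterates---and in fact you make the role of the $L_{\uparrow}$ property more explicit than the paper does for the $L_{\downarrow}$ property in Theorem~\ref{TheoremNazSe:1}.
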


\begin{corollary}
	\label{CorollaryNazSe:1}
	\ Let $(X,d,\preceq_{\downarrow})$ be a complete ordered $L_{\downarrow}$ and $
	T,S:X\rightarrow B(X)$ be two partially dominated mappings which satisfy
	\begin{gather*}
		\psi (\delta (S^{k}x,T^{k}y))\leq \psi (M(x,y))-\varphi (M(x,y)),  \\
 	M(x,y)=\max \left \{ d(x,y),\delta (x,S^{k}x),\delta (y,T^{k}y),{\frac{1}{2}}
		( D(x,T^{k}y)+D(y,S^{k}x)) \right \}
	\end{gather*}
	for all $x,y\in X_{\preceq_{\downarrow}}$ and
$k\in\mathbb{N},$ with $\psi \in \Upsilon $ and
$\varphi :\mathbb{R}_{\geq 0}\rightarrow \mathbb{R}_{\geq 0}$ satisfying, on the sequences $\{t_n\}\subset \mathbb{R}_{\geq 0}$, the property 	
    \begin{equation*}
		(\varphi (t_{n})\rightarrow 0\  \ as \ n\rightarrow \infty) \ \ \ \Rightarrow\  \  \ (t_{n}\rightarrow 0 \ as \ n\rightarrow \infty).
	\end{equation*}
	Then, there exists the point $u\in X$ such that $\{u\}=Tu=Su$. Moreover, if
	the end points of $S^{k}$ and $T^{k}$ are comparable, then $S$ and $T$ have
	a unique common end point.
\end{corollary}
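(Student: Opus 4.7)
The plan is to derive the corollary from Theorem~\ref{TheoremNazSe:1} by applying that theorem to the iterates $S^{k}$ and $T^{k}$, and then transferring the common end point of $S^{k},T^{k}$ back to the original mappings $S,T$.

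First I would check that the hypotheses of Theorem~\ref{TheoremNazSe:1} are met by $S^{k}$ and $T^{k}$. The assumptions on $\psi\in\Upsilon$ and on the implication $\varphi(t_{n})\to 0\Rightarrow t_{n}\to 0$ are identical in the two statements, and the contractive inequality of the corollary is precisely (\ref{tequ1.6}) with $S^{k},T^{k}$ substituted for $S,T$. It remains to verify that $S^{k}$ and $T^{k}$ are partially dominated. For this I would argue by induction on the exponent: given $y\in X$, repeated application of partial dominance of $S$ produces a chain $x_{k}\preceq x_{k-1}\preceq\cdots\preceq x_{1}\preceq y$ with $x_{i+1}\in Sx_{i}$, so $x_{k}\in S^{k}y$ with $x_{k}\preceq y$; the argument is symmetric for $T^{k}$. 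Theorem~\ref{TheoremNazSe:1} then supplies a point $u\in X$ with $\{u\}=S^{k}u=T^{k}u$.

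The main obstacle will be upgrading this to $\{u\}=Su=Tu$. I would exploit the composition identity
\[
Su=S(S^{k}u)=S^{k+1}u=S^{k}(Su),
\]
and likewise for $T$, so that $Su$ is an $S^{k}$-invariant set. Using partial dominance of $S$ one can select $a\in Su$ with $a\preceq u$; the contractive condition for $S^{k},T^{k}$ at the comparable pair $(a,u)$, together with the inclusion $S^{k}a\subseteq S^{k}(Su)=Su$ and the identity $T^{k}u=\{u\}$, should force $\delta(S^{k}a,\{u\})=0$ and then $d(a,u)=0$, mimicking the final stage of the proof of Theorem~\ref{TheoremNazSe:1} where the end-point property was extracted. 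An analogous computation with a dominated $b\in Tu$ will yield $Tu=\{u\}$. This is the delicate part, because the required control of $M(a,u)$ must be obtained without assuming a priori that $Su$ is already a singleton; the chain of inequalities needs the monotonicity of $\psi$, the vanishing criterion for $\varphi$, and the triangle bounds on $\delta$ and $D$ used throughout the proof of Theorem~\ref{TheoremNazSe:1}.

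Finally, for uniqueness I would observe that any common end point $v$ of $S$ and $T$ is automatically a common end point of $S^{k}$ and $T^{k}$, so the comparability hypothesis of the corollary applies to the pair $u,v$; feeding $x=u$, $y=v$ into the contractive condition and using $M(u,v)=d(u,v)$ together with the property of $\varphi$ yields $d(u,v)=0$, exactly as in the closing lines of the proof of Theorem~\ref{TheoremNazSe:1}.
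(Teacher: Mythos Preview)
Your overall plan---apply Theorem~\ref{TheoremNazSe:1} to the iterates $S^{k},T^{k}$, then push the resulting end point back to $S,T$ via the identity $Su=S(S^{k}u)=S^{k+1}u=S^{k}(Su)$---is exactly the paper's route. The divergence is in how the upgrade from $S^{k}u=T^{k}u=\{u\}$ to $Su=Tu=\{u\}$ is carried out. The paper does not attempt your ``delicate part'' at all: it first invokes the \emph{uniqueness} clause of the theorem (hence the comparability hypothesis) to know that $u$ is the unique common end point of $S^{k}$ and $T^{k}$, then reads the composition identities as saying that $Su$ and $Tu$ are themselves end points of $S^{k}$ and $T^{k}$, and concludes $Su=Tu=\{u\}$ by that uniqueness. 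This is the classical single-valued argument transplanted verbatim; it is a two-line affair and avoids any further use of the contractive inequality, but it treats the set $Su$ as if it were a point and effectively folds the existence claim into the ``moreover'' clause. Your approach---selecting a dominated $a\in Su$ and running the $(\psi,\varphi)$-inequality at the comparable pair $(a,u)$---is more scrupulous about the multivalued setting and would, if the estimates on $M(a,u)$ can be closed, establish existence independently of comparability; the price is precisely the bookkeeping you flag as delicate, which the paper simply bypasses.
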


\begin{proof} It follows from Theorem  \ref{TheoremNazSe:2}, that
$S^{k}u=T^{k}u=\{u\}$ and $u$ is a unique common end point for $S^{k}$ and $T^{k}.$ Now
	\begin{equation*}
		\{Su\} = SS^{k}u=S^{k+1}u=S^{k}Su,\quad \{Tu\} = TT^{k}u=T^{k+1}u=T^{k}Tu
	\end{equation*}
	imply that $Su\ $and $Tu$ are also end points of $S^{k}\ $and $T^{k}$. By
	the uniqueness, $u$ is unique common end point of $S$ and $T$.
\qed \end{proof}

\begin{corollary}
	\label{CorollaryNazSe:2}
	Let $(X,d,\preceq_{\downarrow})$ be a complete ordered $L_{\downarrow}$ metric space and let $
	T,S:X\rightarrow B(X)$ be two partially dominated mappings which satisfy
	\begin{equation*}
		\delta (Sx,Ty)\leq \lambda \max \{d(x,y),\delta (x,Sx),\delta (y,Ty),
{\frac{1}{2}}(D(x,Ty)+D(y,Sx)) \},
	\end{equation*}
	for all $x,y\in X_{\preceq_{\downarrow}}$ and $0 \leq \lambda <1.$ Then, there
	exists the point $u\in X$ such that $\{u\}=Tu=Su$. Moreover, if the end
	points of $S$ and $T$ are comparable, then $S$ and $T$ have a common end
	point.
\end{corollary}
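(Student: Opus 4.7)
The plan is to deduce Corollary \ref{CorollaryNazSe:2} directly from Theorem \ref{TheoremNazSe:1} by a judicious choice of the control functions $\psi$ and $\varphi$. The given contractive inequality $\delta(Sx,Ty) \leq \lambda M(x,y)$ has the right shape to be rewritten as a $(\psi,\varphi)$-weak contraction once we absorb the factor $\lambda$ into the subtracted term.

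Concretely, I would define $\psi, \varphi : \mathbb{R}_{\geq 0} \to \mathbb{R}_{\geq 0}$ by $\psi(t) = t$ and $\varphi(t) = (1-\lambda) t$. Then for all $x,y \in X_{\preceq_{\downarrow}}$ one has
\[
\psi(M(x,y)) - \varphi(M(x,y)) = M(x,y) - (1-\lambda)M(x,y) = \lambda M(x,y) \geq \delta(Sx,Ty) = \psi(\delta(Sx,Ty)),
\]
so the generalized $(\psi,\varphi)$-weak contractive condition \eqref{tequ1.6}--\eqref{tequ1.7} is satisfied.

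Next I would verify the two hypotheses on the control functions. First, $\psi(t) = t$ is clearly non-decreasing, and since it is continuous, for any sequences with $\lim a_n = \lim b_n$ we have $\lim \psi(a_n) = \lim \psi(b_n)$; hence $\psi \in \Upsilon$. Second, since $\lambda < 1$ the constant $1-\lambda$ is strictly positive, so $\varphi(t_n) = (1-\lambda) t_n \to 0$ forces $t_n \to 0$, which is the required implication on $\varphi$.

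With all hypotheses of Theorem \ref{TheoremNazSe:1} in force, I would invoke it to obtain a point $u \in X$ with $\{u\} = Tu = Su$, and the uniqueness clause of that theorem immediately yields uniqueness under the additional assumption that the end points of $S$ and $T$ are comparable. There is no real obstacle here; the only thing to check carefully is that the constant $\lambda$ truly satisfies $0 \leq \lambda < 1$ so that $\varphi$ enjoys the required positivity property, which is part of the hypothesis.
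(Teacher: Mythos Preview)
Your proof is correct and matches the paper's own argument essentially verbatim: the paper also takes $\psi(t)=t$ and $\varphi(t)=(1-\lambda)t$ and appeals to the main theorem (it cites Theorem~\ref{TheoremNazSe:2}, but given the $L_{\downarrow}$ setting and the partially dominated hypothesis this is evidently meant to be Theorem~\ref{TheoremNazSe:1}, exactly as you invoke). Your additional verification that $\psi\in\Upsilon$ and that $\varphi(t_n)\to 0\Rightarrow t_n\to 0$ is routine but appropriate, and the paper omits it.
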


\begin{proof} \smartqed Define $\varphi ,\psi :\mathbb{R}_{\geq 0}\rightarrow \mathbb{R}_{\geq 0}$ by $\psi (t)=t$ and $\varphi (t)=(1-\lambda )t$ for all $t\in	\mathbb{R}_{\geq 0}.$ The result follows from Theorem \ref{TheoremNazSe:2}.
\qed \end{proof}

\begin{corollary}
	\label{CorollaryNazSe:3}	Let $(X,d,\preceq_{\downarrow})$ be a complete ordered $L_{\downarrow}$ metric space and $
	T,S:X\rightarrow B(X)$ be two partially dominated mappings which satisfy
	\begin{equation*}
		\psi (\delta (Sx,Ty))\leq \psi (d(x,y))-\varphi (d(x,y))
	\end{equation*}
	for all $x,y\in X_{\preceq_{\downarrow}}$ and $k\in \mathbb{N},$ where $\psi ,\varphi :\mathbb{R}_{\geq 0}\rightarrow \mathbb{R}_{\geq 0},$ $\psi $
	is continuous monotone nondecreasing, $\varphi $ is a lower semi-continuous,
	and $\psi (t)=\varphi (t)=0$ if and only if $t=0.$ Then, there exists the
	point $u\in X$ such that $\{u\}=Tu=Su$. Moreover, if the end points of $S$
	and $T$ are comparable, then $S$ and $T$ have a unique common end point.
\end{corollary}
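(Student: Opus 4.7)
The plan is to imitate the proof of Theorem~\ref{TheoremNazSe:1} with the simpler right-hand side $\psi(d(x,y))-\varphi(d(x,y))$ in place of $\psi(M(x,y))-\varphi(M(x,y))$, noting that $d(x,y)$ is more tractable than $M(x,y)$ so several estimates actually simplify. Before starting the iteration, I would verify that the present hypotheses on $\psi$ and $\varphi$ play the same role as those of Theorem~\ref{TheoremNazSe:1}: continuity of $\psi$ yields $\psi\in\Upsilon$, because $\lim a_n=\lim b_n$ forces $\lim\psi(a_n)=\psi(\lim a_n)=\lim\psi(b_n)$; and lower semi-continuity of $\varphi$ together with $\varphi(t)=0\iff t=0$ produces the implication $\varphi(t_n)\to 0\Rightarrow t_n\to 0$ for every bounded sequence, since any cluster point $t^{\ast}\ge 0$ would satisfy $\varphi(t^{\ast})\le \liminf\varphi(t_n)=0$ and hence $t^{\ast}=0$.

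Having secured those auxiliary facts, I would build $\{x_n\}$ with $x_{n+1}\in Sx_n$ (or $Tx_n$) and $x_{n+1}\preceq x_n$ exactly as in Theorem~\ref{TheoremNazSe:1}, and set $A_n$ accordingly. The key new estimate uses $x_n\in A_{n-1}$ and $x_{n+1}\in A_n$, whence $d(x_n,x_{n+1})\le \delta(A_{n-1},A_n)$, so the contractive inequality supplies
\begin{equation*}
\psi(\delta(A_n,A_{n+1}))\le \psi(d(x_n,x_{n+1}))-\varphi(d(x_n,x_{n+1}))\le \psi(\delta(A_{n-1},A_n)).
\end{equation*}
Hence $\{\psi(\delta(A_n,A_{n+1}))\}$ is non-increasing and convergent, and telescoping forces $\varphi(d(x_n,x_{n+1}))\to 0$, so $d(x_n,x_{n+1})\to 0$; continuity of $\psi$ then gives $\psi(\delta(A_n,A_{n+1}))\to 0$ and therefore $\delta(A_n,A_{n+1})\to 0$. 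The Cauchy step and the verification that the limit $u$ satisfies $Su=Tu=\{u\}$ follow Theorem~\ref{TheoremNazSe:1} line by line with $d$ replacing $M$ throughout. Uniqueness is immediate: if $v$ is another end point with $(u,v)\in X_{\preceq}$, then $\psi(d(u,v))=\psi(\delta(Su,Tv))\le \psi(d(u,v))-\varphi(d(u,v))$ forces $\varphi(d(u,v))=0$ and so $u=v$.

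The main technical obstacle is that, because $\psi$ is only non-decreasing and need not be strictly increasing, one cannot pass directly from $\psi(\alpha)\le\psi(\beta)$ to $\alpha\le\beta$, so monotonicity of $\{\psi(\delta(A_n,A_{n+1}))\}$ does not immediately give monotonicity of $\{\delta(A_n,A_{n+1})\}$. The workaround is to work with the transformed sequence throughout, to extract $d(x_n,x_{n+1})\to 0$ from the telescoping bound, and then to deduce $\delta(A_n,A_{n+1})\to 0$ from $\psi(\delta(A_n,A_{n+1}))\to 0$ via continuity of $\psi$ together with $\psi(t)=0\iff t=0$. Once this is in hand, no new ideas are required and the remainder of the argument is a transcription of the proof of Theorem~\ref{TheoremNazSe:1}.
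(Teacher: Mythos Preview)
Your proof is correct, but it is considerably more work than what the paper records: the paper's entire proof of this corollary is the single sentence ``The result follows from Theorem~\ref{TheoremNazSe:2}'' (presumably Theorem~\ref{TheoremNazSe:1}, given the $L_{\downarrow}$ setting). In other words, the authors treat the corollary as a direct consequence of the main theorem, whereas you re-run the whole iteration argument with $d(x,y)$ in place of $M(x,y)$.

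Your route is arguably the more honest one. A literal application of Theorem~\ref{TheoremNazSe:1} would require showing that the hypothesis $\psi(\delta(Sx,Ty))\le \psi(d(x,y))-\varphi(d(x,y))$ implies the generalized $(\psi,\varphi)$-weak contractive condition $\psi(\delta(Sx,Ty))\le \psi(M(x,y))-\varphi(M(x,y))$, and since $d(x,y)\le M(x,y)$ only gives $\psi(d(x,y))\le\psi(M(x,y))$, this implication is not automatic. You sidestep that gap by repeating the proof itself, and your handling of the two genuine subtleties---that lower semi-continuity of $\varphi$ gives $\varphi(t_n)\to 0\Rightarrow t_n\to 0$ only along bounded sequences (which is all that the argument needs), and that monotonicity of $\psi$ does not allow one to cancel $\psi$ but monotonicity together with $\psi(t)=0\iff t=0$ still forces $\psi(s_n)\to 0\Rightarrow s_n\to 0$---is sound. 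What the paper's one-line proof buys is brevity; what yours buys is an argument that actually closes.
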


\begin{proof} \smartqed
	The result follows from Theorem \ref{TheoremNazSe:2}.
\qed \end{proof}

\begin{corollary}
	\label{CorollaryNazSe:4}
	Let $(X,d,\preceq_{\downarrow})$ be a complete ordered $L_{\downarrow}$ metric space and $
	T,S:X\rightarrow B(X)$ be two partially dominated mappings which satisfy
	\begin{equation*}
		\delta (Sx,Ty)\leq \frac{d(x,y)}{d(x,y)+1}
	\end{equation*}
	for all $x,y\in X_{\preceq_{\downarrow}}.$ Then, there exists the point $u\in X$
	such that $\{u\}=Tu=Su$. Moreover, if the end points of $S$ and $T$ are
	comparable, then $S$ and $T$ have a unique common end point.
\end{corollary}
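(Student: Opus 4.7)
The plan is to realize this as a direct specialization of Theorem \ref{TheoremNazSe:1} (the same route taken by the three preceding corollaries), by choosing $\psi$ and $\varphi$ so that the contractive condition of Definition \ref{DefinNazSe:6.} is exactly equivalent to the M\"obius-type bound $\delta(Sx,Ty)\le d(x,y)/(d(x,y)+1)$. The natural candidate is $\psi(t)=t$ together with $\varphi(t)=t^{2}/(t+1)$, since then
$$\psi(M)-\varphi(M)=M-\frac{M^{2}}{M+1}=\frac{M}{M+1}.$$
So the entire task reduces to verifying that this pair $(\psi,\varphi)$ meets the hypotheses of Theorem \ref{TheoremNazSe:1} and that the stated bound forces the generalized $(\psi,\varphi)$-weak contractive inequality.

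First I would check the admissibility of $(\psi,\varphi)$. The function $\psi(t)=t$ is trivially nondecreasing and lies in $\Upsilon$, since $\lim\psi(a_n)=\lim\psi(b_n)$ whenever $\lim a_n=\lim b_n$. For $\varphi(t)=t^{2}/(t+1)$, one computes $\varphi'(t)=t(t+2)/(t+1)^{2}>0$ for $t>0$, so $\varphi$ is strictly increasing with $\varphi(0)=0$. Consequently, if $\varphi(t_n)\to 0$ but $t_n\not\to 0$, there would exist $\varepsilon>0$ and a subsequence with $t_{n_k}\ge\varepsilon$, yielding $\varphi(t_{n_k})\ge\varphi(\varepsilon)>0$, a contradiction. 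Hence $\varphi(t_n)\to 0$ implies $t_n\to 0$, which is exactly the property required in Theorem \ref{TheoremNazSe:1}.

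Next I would pass from the hypothesis to the weak contractive inequality. The map $t\mapsto t/(t+1)$ is increasing on $\mathbb{R}_{\ge 0}$, and by definition $d(x,y)\le M(x,y)$. Therefore, for all $(x,y)\in X_{\preceq_{\downarrow}}$,
$$\delta(Sx,Ty)\le\frac{d(x,y)}{d(x,y)+1}\le\frac{M(x,y)}{M(x,y)+1}=\psi(M(x,y))-\varphi(M(x,y)),$$
which is precisely \eqref{tequ1.6} with the chosen $\psi,\varphi$. Theorem \ref{TheoremNazSe:1} then delivers a point $u\in X$ with $\{u\}=Su=Tu$, and, under the comparability assumption on end points, the uniqueness.

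There is essentially no obstacle here beyond identifying the right $\varphi$; the only mildly delicate point is confirming that $\varphi(t)=t^{2}/(t+1)$ satisfies the implication $\varphi(t_n)\to 0\Rightarrow t_n\to 0$, which is handled via the strict monotonicity of $\varphi$ as above. Everything else is substitution into Theorem \ref{TheoremNazSe:1}.
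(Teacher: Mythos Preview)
Your proof is correct and uses exactly the same key choice as the paper, namely $\psi(t)=t$ and $\varphi(t)=t^{2}/(t+1)$. The only cosmetic difference is that the paper cites Corollary~\ref{CorollaryNazSe:3} (whose hypothesis involves $d(x,y)$ rather than $M(x,y)$), whereas you go straight to Theorem~\ref{TheoremNazSe:1} via the monotonicity of $t\mapsto t/(t+1)$ and the inequality $d(x,y)\le M(x,y)$; the substance is identical.
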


\begin{proof} \smartqed Define $\varphi ,\psi :\mathbb{R}_{\geq 0}\rightarrow \mathbb{R}_{\geq 0}$ by $\psi (t)=t$ and $\varphi (t)=\frac{t^{2}}{t+1}$ for all $t\in \mathbb{R}_{\geq 0}.$ The result follows from Corollary \ref{CorollaryNazSe:3}.
\qed \end{proof}

 If we take $T=S$, then we have the following corollary.

\begin{corollary}
	\label{CorollaryNazSe:5}
	Let $(X,d,\preceq_{\downarrow})$ be a complete ordered $L_{\downarrow}$ metric space and $
	T:X\rightarrow B(X)$ be a partially dominated map which satisfies
	\begin{equation*}
		\delta (Tx,Ty)\leq \frac{d(x,y)}{d(x,y)+1}
	\end{equation*}
	for all $x,y\in X_{\preceq_{\downarrow}}.$ Then, there exists the point $u\in X$
	such that $\{u\}=Tu$. Moreover, if the end points of $T$ are comparable,
	then $T$ has a unique end point.
\end{corollary}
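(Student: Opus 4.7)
The plan is to deduce Corollary \ref{CorollaryNazSe:5} as an immediate specialization of Corollary \ref{CorollaryNazSe:4} by taking $S=T$. Since Corollary \ref{CorollaryNazSe:4} is already proved for two partially dominated mappings $S,T:X\rightarrow B(X)$ satisfying $\delta(Sx,Ty)\leq \frac{d(x,y)}{d(x,y)+1}$ on $X_{\preceq_{\downarrow}}$, setting both mappings equal to the given $T$ collapses the hypotheses to exactly those of the corollary under consideration.

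The concrete steps are: first, observe that $T$ is partially dominated by assumption, so the pair $(S,T):=(T,T)$ consists of two partially dominated mappings. Next, the contractive inequality $\delta(Tx,Ty)\leq \frac{d(x,y)}{d(x,y)+1}$ for all $x,y\in X_{\preceq_{\downarrow}}$ is precisely the hypothesis of Corollary \ref{CorollaryNazSe:4} with $S$ replaced by $T$. Applying that corollary, we obtain a point $u\in X$ with $\{u\}=Su=Tu$, which in our setting simply reads $\{u\}=Tu$, establishing the existence of an end point.

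For the uniqueness statement, suppose that the end points of $T$ are comparable. Then the end points of $S=T$ and $T$ are trivially comparable (any two end points of $T$ are, by assumption, related under $\preceq$), so the comparability hypothesis of the moreover clause in Corollary \ref{CorollaryNazSe:4} is satisfied. That clause yields uniqueness of the common end point of $S$ and $T$, i.e., uniqueness of the end point of $T$.

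There is really no obstacle here; the only thing to check is that the reduction $S=T$ is legitimate, which is immediate since all hypotheses of Corollary \ref{CorollaryNazSe:4} (partial domination, the contractive inequality, and the comparability condition for uniqueness) are symmetric in $S$ and $T$ and therefore survive the identification. Thus the proof reduces to one line: \emph{Apply Corollary \textup{\ref{CorollaryNazSe:4}} with $S=T$.}
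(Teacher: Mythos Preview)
Your proposal is correct and matches the paper's approach exactly: the paper introduces this corollary with the one-line remark ``If we take $T=S$, then we have the following corollary,'' which is precisely your reduction of Corollary~\ref{CorollaryNazSe:4} via $S=T$.
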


 A single-valued selfmap $f$ on $X$ is called dominated map if $fx\preceq x$
for each $x\in X$.

\begin{corollary}
	\label{CorollaryNazSe:6}
	Let $(X,d,\preceq_{\downarrow})$ be a complete ordered $L_{\downarrow}$ metric space and $
	f,g:X\rightarrow X$ be two partially dominated mappings satisfy
	\begin{equation}
			\max \left \{
			\begin{array}{lll}
				d(fx,gx), & d(fx,gy), &  \\ 				
                d(fy,gx), & d(fy,gy) &
			\end{array}
			\right \}  \leq  \frac{d(x,y)}{d(x,y)+1} 		
		\label{tCorolNazSe:6}
	\end{equation}
	for all $x,y\in X_{\preceq_{\downarrow}}$. Then, there exists the point $u\in X$ such
	that $u=fu=gu$. Moreover, if the common fixed points of $f$ and $g$ are
	comparable, then $f$ and $g$ have a unique common fixed point.
\end{corollary}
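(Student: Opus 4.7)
The plan is to reduce the statement to Corollary~\ref{CorollaryNazSe:4} by viewing the single-valued maps $f$ and $g$ as multivalued maps with singleton values. Concretely, I would define $S,T:X\to B(X)$ by
\[
Sx:=\{fx\},\qquad Tx:=\{gx\}\quad \text{for all}\ x\in X.
\]
Since singletons are bounded, $S,T$ indeed map $X$ into $B(X)$.

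Next I would verify that $S$ and $T$ are partially dominated in the sense of Definition~\ref{DefinNazSe:5.}. Given any $y\in X$, since $f$ is dominated we have $fy\preceq y$ with $fy\in Sy$, which is exactly the partially-dominated condition for $S$; the same check works for $T$ using the dominated property of $g$. Then, because $Sx$ and $Ty$ are singletons, one has $\delta(Sx,Ty)=d(fx,gy)$, and $d(fx,gy)$ is one of the four terms appearing in the maximum on the left-hand side of \eqref{tCorolNazSe:6}. Hence the hypothesis of the corollary yields
\[
\delta(Sx,Ty)=d(fx,gy)\leq \frac{d(x,y)}{d(x,y)+1}\quad \text{for all}\ (x,y)\in X_{\preceq_{\downarrow}},
\]
which is precisely the contractive condition required by Corollary~\ref{CorollaryNazSe:4}.

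Applying Corollary~\ref{CorollaryNazSe:4} to $S,T$ in the complete ordered $L_{\downarrow}$ metric space $(X,d,\preceq_{\downarrow})$ produces a point $u\in X$ with $\{u\}=Su=Tu$. Unpacking the definitions gives $\{fu\}=\{u\}=\{gu\}$, i.e.\ $u=fu=gu$, which is a common fixed point of $f$ and $g$. For the uniqueness clause, note that the end points of $S$ and $T$ are exactly the common fixed points of $f$ and $g$, so the comparability assumption in the statement matches the comparability hypothesis in Corollary~\ref{CorollaryNazSe:4} and uniqueness follows at once.

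The only step that requires any care is the bookkeeping between the single-valued and multivalued notions: checking that ``$f$ dominated'' translates into ``$S=\{f\cdot\}$ partially dominated'' and that the cross term $d(fx,gy)$ inside the maximum is what matches $\delta(Sx,Ty)$. Once this translation is made, the proof is essentially an invocation of the previous corollary, and no further estimates are needed.
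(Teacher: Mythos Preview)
Your argument is correct, but it differs from the paper's route. The paper bundles both maps into a single multivalued mapping $Tx=\{fx,gx\}$ and invokes Corollary~\ref{CorollaryNazSe:5}; since $\delta(Tx,Ty)=\max\{d(fx,fy),d(fx,gy),d(gx,fy),d(gx,gy)\}$, all four terms in the hypothesis \eqref{tCorolNazSe:6} are used to bound this diameter, and an end point of $T$ is automatically a common fixed point of $f$ and $g$. Your reduction instead takes $Sx=\{fx\}$, $Tx=\{gx\}$ and applies Corollary~\ref{CorollaryNazSe:4}, which only needs the single cross term $d(fx,gy)$; this is cleaner and in fact reveals that the full four-term maximum in \eqref{tCorolNazSe:6} is stronger than necessary for the conclusion. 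The paper's packaging, on the other hand, makes the role of every term in the stated hypothesis transparent and avoids having to match up the separate ``end points of $S$'' and ``end points of $T$'' with the common fixed points of $f,g$.
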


\begin{proof} \smartqed We can define $Tx=\{fx,gx\}$. From \eqref{tCorolNazSe:6}, we conclude that
	\begin{equation*}
		\delta (Tx,Ty)\leq \frac{d(x,y)}{d(x,y)+1}.
	\end{equation*}
	Now, we apply Corollary \ref{CorollaryNazSe:5} to obtain that $T$ has a unique end point $
	x_{0}$. Therefore, $Tx_{0}=\{x_{0}\}$. Hence $fx_{0}=gx_{0}=x_{0}$.
\qed \end{proof}

%\input{referenc}

%\printindex
%%\addcontentsline{toc}{chapter}{Index}
%\printindex[aut]
%\addcontentsline{toc}{chapter}{Author Index}

\end{document}